\theoremstyle{definition}
\newtheorem{theorem}{Theorem}[section]
\newtheorem{corollary}[theorem]{Corollary}
\newtheorem{lemma}[theorem]{Lemma}
\newtheorem{proposition}[theorem]{Proposition}
\newtheorem{example}{Example}
\newcommand{\kma}{\mathfrak{g}}
\newcommand{\csa}{\mathfrak{h}}
\newcommand{\mcg}{G}
\newcommand{\form}[2]{(#1 \mid #2)}
\newcommand{\rop}{\Delta_+}
\newcommand{\sr}{\Pi}
\newcommand{\integers}{\mathbb{Z}}
\newcommand{\pp}{\mathcal{P}}
\newcommand{\bb}{\mathcal{B}}
\newcommand{\be}{\begin{enumerate}}
\newcommand{\ee}{\end{enumerate}}
\newcommand{\beq}{\begin{equation}}
\newcommand{\eeq}{\end{equation}}
\newcommand{\pg}[1][q]{\chi(G; \,#1)}
\newcommand{\pgsig}[2][q]{\chi(G(#2); \,#1)}
\newcommand{\ptg}[1][q]{\widetilde{\chi}(G; \,#1)}
\newcommand{\bl}[1][G]{L_{#1}}
\newcommand{\kost}[1][\beta]{K(#1;\,q)}
\newcommand{\mob}{\mu}
\newcommand{\zhat}{\hat{0}}
\newcommand{\covers}{\rightarrow}
\newcommand{\ed}{\boldsymbol{e}}
\newcommand{\nminus}[1][-]{\mathfrak{n}^{#1}}
\DeclareMathOperator{\mult}{mult}
\DeclareMathOperator{\start}{start}
\DeclareMathOperator{\eend}{end}
\DeclareMathOperator{\len}{len}
\DeclareMathOperator{\supp}{supp}
\DeclareMathOperator{\spann}{span}
\begin{document}

\title[]{Chromatic polynomials of graphs from Kac-Moody algebras}
\author{R. Venkatesh}
\author{Sankaran Viswanath}
\address{Tata Institute of Fundamental Research, Homi Bhabha Road, Colaba, Mumbai 400005, India.}
\email{rvenkat@math.tifr.res.in}
\address{The Institute of Mathematical Sciences, CIT campus, Taramani,
Chennai 600113, India.}
\email{svis@imsc.res.in}
\subjclass[2010]{05C31,17B10,17B67}
\keywords{Chromatic polynomial, $q$-Kostant partition function, 
Kac-Moody algebras}
\begin{abstract}
We give a new interpretation of the chromatic polynomial of a simple graph $G$ in terms of the Kac-Moody Lie algebra $\kma$ with Dynkin diagram $G$. We show that the chromatic polynomial is essentially the $q$-Kostant partition function of $\kma$ evaluated on the sum of the simple roots. 
Applying the Peterson recurrence formula for root multiplicities of $\kma$, we obtain a new realization of the chromatic polynomial as a weighted sum of paths in the bond lattice of $G$.
\end{abstract}
\maketitle

\section{Introduction}

Let $\mcg$ be a simple graph on $l$ vertices. Let $\kma$ denote the
Kac-Moody Lie algebra with Dynkin diagram $\mcg$. In other words,
first define the $l \times l$ symmetric matrix $B:= 2I - A$ where $A$ is the adjacency matrix of $\mcg$. Then $B$ is a generalized Cartan matrix, and $\kma$ is the Kac-Moody algebra constructed from $B$. 
 We do not assume that $G$ is connected; so the Lie algebra $\kma$ is a direct sum of the Kac-Moody algebras corresponding to the connected components of $G$. 
In this paper, we obtain a new expression for the chromatic polynomial of $\mcg$ in terms of the Lie algebra $\kma$.

\subsection{}
We recall that the Kac-Moody algebra $\kma$ with Dynkin
diagram $G$ is a complex Lie algebra defined via
generators-and-relations \cite{kac}. The Lie algebra $\kma$ is
finite-dimensional precisely when every connected component of $G$ is
a Dynkin diagram of type $A_n \, (n \geq 1)$, $D_n\, (n \geq 4)$ or $E_n\, (n=6, 7, 8)$.

We let the root space decomposition of $\kma$ be 
 $$\kma = \csa \oplus \bigoplus_{\alpha \in \rop} (\kma_\alpha \oplus \kma_{-\alpha})$$ where $\csa$ is the Cartan subalgebra, $\rop$ is the set of positive roots, and $\kma_\alpha:=\{x \in \kma: [h,x]=\alpha(h) \, x \, \text{ for all } h \in \csa\}$ are the root spaces. Let $\mult \alpha := \dim \kma_{\alpha}$ be the multiplicity of the root $\alpha$. We also let $\nminus[\pm] := \bigoplus_{\alpha \in \rop} \kma_{\pm \alpha}$.

Let $\sr$ denote the set of simple roots of $\kma$. 
We identify $\sr$ with the vertex set of $G$.
Given a subset $S \subset \sr$, define $G(S)$ to be the subgraph of
$G$ induced by $S$, and let
$$\beta(S) := \sum_{\alpha \in S} \alpha.$$
It is well-known that $\beta(S)$ is a root of $\kma$ (i.e., $\mult
\beta(S) > 0$) iff  $G(S)$ is connected. Further, $\mult \beta(S)$
depends only on $G(S)$ and not on the ambient graph $G$ \cite{kac}. 

Now, let $\pi = \{S_1, S_2, \cdots, S_k\}$ be a partition of $\sr$, i.e., the $S_i$ are non-empty pairwise disjoint subsets of $\sr$ such that $\bigcup_i S_i = \sr$. Define  
$$\mult \pi := \prod_{i=1}^k \mult \beta(S_i).$$
Thus $\mult \pi > 0$ iff each $S_i$ induces a connected subgraph of $G$, or in other words, iff $\pi$ is an element of the {\em bond lattice} of $G$. We recall \cite{rota,stanley-sfgcp} that the bond lattice $\bl$ of $G$ is the  set of all partitions $\pi = \{S_1, S_2, \cdots, S_k\}$ of $\sr$ such that each $S_i$ induces a connected subgraph of $G$ 
(such $S_i$ will be called {\em connected subsets} of $\sr$). It is partially ordered by refinement, with $\pi > \pi^\prime$ iff $\pi^\prime$ refines $\pi$. 
$\bl$ is a ranked poset, with rank function $l - |\pi|$, where $|\pi|$ is the number of parts in $\pi$ ($=k$ if $\pi = \{S_1, S_2, \cdots, S_k\}$). The partition $\zhat$ of $\sr$ into $l$ singleton subsets is the unique minimal element
of  $\bl$.

\subsection{}
We now recall some standard notions about chromatic polynomials. For each positive integer $q$, let $\pg$ be the number of {\em proper colorings} of $G$ with $q$ colors, i.e., the number of ways of assigning one of $q$ colors to each vertex of $G$ such that no two adjacent vertices have the same color.
Then $\pg$ is a polynomial in $q$, and is called the chromatic polynomial of $G$.

We will identify the set $\sr$ of simple roots of $\kma$ with the vertices of
$G$ as above. A non-empty subset $K \subset \sr$ is said to be {\em independent} if no two vertices in $K$ have an edge between them. For $k \geq 1$, let $P_k(\mcg)$ denote the set of ordered partitions of $\sr$ into $k$ independent sets, i.e., $P_k(\mcg)$ is the set of ordered $k$-tuples $(J_1,...,J_k)$ such that 
(a) the $J_i$'s are non-empty pairwise disjoint subsets of $\sr$, 
(b) $\bigcup\limits_{i=1}^k J_i=\sr$, and (c) each $J_i$ is independent.
Let $c_k(\mcg) :=|P_k(\mcg)|$. Then, the chromatic polynomial of $\mcg$ has the following well-known expression (cf., \cite{rcread}):
\beq\label{cpe}
\pg = \sum_{k \geq 1} c_k(\mcg) \, {q \choose k}.
\eeq

\subsection{}

For $\Sigma=\{S_1, \cdots, S_k\} \in \bl$, we let $G(\Sigma)$ denote the union of the subgraphs of $G$ 
induced by $S_i$, $i=1\cdots k$. Our first main theorem 
relates the chromatic polynomial of $G$ to root multiplicities of $\kma$.

\begin{theorem} \label{pkost}
Let $G$ be a simple graph. With notation as above, we have
\be
\item[(a)]
\beq\label{pkosteq}
\pg = \sum_{\pi \in \bl} (-1)^{l - |\pi|} \, \mult \pi \;\, q^{|\pi|}.
\eeq
\item[(b)] 
  More generally, let $\Sigma \in \bl$. Then 
\beq\label{pkosteq2}
\pgsig{\Sigma} = \sum_{\zhat \leq \pi \leq \Sigma} (-1)^{l - |\pi|} \, \mult \pi \;\, q^{|\pi|}.
\eeq
\ee
\end{theorem}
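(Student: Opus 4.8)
The plan is to reduce the theorem to one statement about a single root multiplicity of the Kac--Moody algebra of a connected graph, and then prove that statement by induction via the Weyl--Kac denominator identity.

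\emph{Reductions.} Part (a) is the special case of (b) in which $\Sigma$ is the partition of $\sr$ into the vertex sets of the connected components of $G$: then $G(\Sigma)=G$, and since every $\pi\in\bl$ already refines the component partition, $\{\pi:\zhat\le\pi\le\Sigma\}=\bl$. Conversely, for an arbitrary $\Sigma=\{S_1,\dots,S_k\}\in\bl$ the graph $G(\Sigma)$ is the disjoint union of the connected graphs $G(S_i)$, so $\pgsig{\Sigma}=\prod_i\chi(G(S_i);q)$, while the lower interval $[\zhat,\Sigma]$ of $\bl$ is canonically $\prod_i\bl[G(S_i)]$, under which $q^{|\pi|}$, $(-1)^{l-|\pi|}$ and $\mult\pi$ are multiplicative over the $S_i$. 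Hence (b) (and, taking $\Sigma$ the component partition, (a) in general) follows once (a) is known for connected $G$. For connected $G$ I would invoke the classical bond-lattice expansion $\pg=\sum_{\pi\in\bl}\mob_{\bl}(\zhat,\pi)\,q^{|\pi|}$ \cite{rota,stanley-sfgcp} together with the factorization $\mob_{\bl}(\zhat,\pi)=\prod_{S\in\pi}\mob_{\bl[G(S)]}(\zhat,\hat{1})$ coming from $[\zhat,\pi]\cong\prod_{S\in\pi}\bl[G(S)]$. Since also $\mult\pi=\prod_{S\in\pi}\mult\beta(S)$ and $(-1)^{l-|\pi|}=\prod_{S\in\pi}(-1)^{|S|-1}$, matching factor by factor reduces \eqref{pkosteq} for connected $G$ to
\beq\label{pkostkey}
\mult\beta(\sr_H)=(-1)^{|V(H)|-1}\,\mob_{\bl[H]}(\zhat,\hat{1})\qquad\text{for every connected graph }H,
\eeq
where $\sr_H$ denotes the simple roots of the Kac--Moody algebra of $H$ and $\hat{1}$ the one-block partition (which lies in $\bl[H]$ as $H$ is connected).

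\emph{A vanishing relation.} The engine is the identity
\beq\label{pkostvanish}
\sum_{\pi\in\bl[H]}(-1)^{|\pi|}\,\mult\pi=0\qquad(H\text{ connected},\ |V(H)|\ge 2),
\eeq
which I would obtain by extracting the coefficient of $e^{-\beta(\sr_H)}$ from the Weyl--Kac denominator identity $\prod_{\gamma\in\rop}(1-e^{-\gamma})^{\mult\gamma}=\sum_{w\in W}(-1)^{\ell(w)}e^{w\rho-\rho}$. On the left, since every simple-root coordinate of $\beta(\sr_H)$ equals $1$, a nonzero contribution must pick distinct positive roots with pairwise disjoint supports covering $\sr_H$; each such root is some $\beta(S)$ with $S$ connected, the chosen $S$'s form a partition $\pi\in\bl[H]$, and selecting one of the $\mult\beta(S)$ copies for each block gives the term $(-1)^{|\pi|}\mult\pi$, so the left side equals $\sum_{\pi\in\bl[H]}(-1)^{|\pi|}\mult\pi$. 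On the right, the coefficient is $\sum_{w:\,\rho-w\rho=\beta(\sr_H)}(-1)^{\ell(w)}$, and I claim this index set is empty. Writing $N(w)=\{\gamma\in\rop:w^{-1}\gamma\in-\rop\}$, so that $\rho-w\rho=\sum_{\gamma\in N(w)}\gamma$, suppose this sum equals $\beta(\sr_H)$; then the roots in $N(w)$ have pairwise disjoint supports partitioning $V(H)$, so $|N(w)|\ge 2$, and connectedness of $H$ gives an edge between the supports of two distinct $\gamma_1,\gamma_2\in N(w)$. Then $\gamma_1+\gamma_2=\beta(\supp\gamma_1\cup\supp\gamma_2)$, having connected support, is again a root; but $N(w)$ is closed under root addition (if $\gamma_1+\gamma_2$ is a root then $w^{-1}(\gamma_1+\gamma_2)=w^{-1}\gamma_1+w^{-1}\gamma_2$ is a sum of two negative roots, hence negative), so $\gamma_1+\gamma_2\in N(w)$, contradicting disjointness of supports. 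This proves \eqref{pkostvanish}.

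\emph{Induction and the obstacle.} Finally I would prove \eqref{pkostkey} by induction on $m=|V(H)|$, the base $m=1$ being $\mult\alpha=1=\mob(\zhat,\zhat)$. For $m\ge2$, isolate in \eqref{pkostvanish} the block $\pi=\hat{1}$ (contribution $-\mult\beta(\sr_H)$), apply the inductive hypothesis to every block of each remaining $\pi$ (all of size $<m$), use $\sum_{S\in\pi}(|S|-1)=m-|\pi|$ to collect signs, and re-assemble $\prod_{S\in\pi}\mob_{\bl[H(S)]}(\zhat,\hat{1})=\mob_{\bl[H]}(\zhat,\pi)$; relation \eqref{pkostvanish} then reads $\mult\beta(\sr_H)=(-1)^{m}\sum_{\pi\ne\hat{1}}\mob_{\bl[H]}(\zhat,\pi)$, and the defining recursion $\sum_{\pi\in\bl[H]}\mob_{\bl[H]}(\zhat,\pi)=0$ yields $\mult\beta(\sr_H)=(-1)^{m-1}\mob_{\bl[H]}(\zhat,\hat{1})$, completing the induction. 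The main obstacle is \eqref{pkostvanish}: the reductions and the induction are routine Möbius-function bookkeeping, but \eqref{pkostvanish} is where genuine Lie theory enters, and the cleanest argument I know is the coefficient extraction from the denominator identity above, resting on convexity of inversion sets together with the graph-theoretic characterization of which $\beta(S)$ are roots.
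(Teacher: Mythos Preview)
Your argument is correct, but it follows a route essentially inverse to the paper's. The paper raises the denominator $U_0$ to a formal $q$-th power and computes the coefficient of $e^{-\beta(\Pi)}$ in $U_0^q$ two ways: the sum side (via a lemma on $\rho-w\rho$ closely related to your biconvexity step) produces $(-1)^l\sum_k c_k(G)\binom{q}{k}=(-1)^l\pg$ directly, while the product side gives $\sum_{\pi}(-q)^{|\pi|}\mult\pi$. Thus the paper obtains \eqref{pkosteq} without ever invoking Birkhoff--Whitney, and only \emph{afterwards} deduces the M\"obius identity $\mu(\zhat,\pi)=(-1)^{l-|\pi|}\mult\pi$ from Theorem~\ref{pkost}(b). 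You reverse this: you take Birkhoff--Whitney as input, reduce Theorem~\ref{pkost} to the single statement \eqref{pkostkey} (which is exactly Proposition~\ref{mobmult} for the top element), and prove \eqref{pkostkey} by induction using your vanishing relation. Your vanishing relation is precisely the $q=1$ specialisation of the paper's Proposition~\ref{lemco}; your biconvexity argument that no $w$ satisfies $\rho-w\rho=\beta(\Pi_H)$ when $H$ is connected with $\ge 2$ vertices is the same phenomenon captured by the paper's Lemma~\ref{keylemma}(c) (a non-independent support forces some coefficient $>1$). What your approach buys is that the Lie-theoretic input is isolated to a single $q=1$ coefficient extraction, and the formal $q$-th power is avoided; what the paper's approach buys is self-containment (no appeal to the classical bond-lattice expansion) and a direct path to the $q$-Kostant interpretation in Corollary~\ref{kostcor}. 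One small expository point: your inference ``so $|N(w)|\ge2$'' does not follow from disjointness alone; you should add that $|N(w)|=\ell(w)$, so $|N(w)|=1$ would force $w$ to be a simple reflection, whose unique inversion is simple.
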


We consider two simple examples to demonstrate this theorem.
\begin{example}
Let $G$ be a tree on $l$ vertices. In this case, it is easy to see
that $\beta(S)$ is a {\em real
  root} of $\kma$ for each connected
subset $S \subset \sr$. In particular this means that $\beta(S)$ has
multiplicity 1 and hence $\mult \pi = 1$ for all $\pi \in \bl$. Now, the
number of $\pi \in \bl$ with $|\pi| = k$ is ${l-1} \choose {k-1}$ for
$1 \leq k \leq l$.  Plugging this into equation \eqref{pkosteq}, we
recover the well-known expression: $\pg = q(q-1)^{l-1}$.
\end{example} 

\begin{example}
Let $G$ denote a cycle graph with $l$ vertices, where $l \geq 3$. This is the Dynkin
diagram of the affine Kac-Moody algebra $A_{l-1}^{(1)}$
\cite{kac}. Let $S$ be a {\em proper} connected subset of $\pi$. Then, as above, $\beta(S)$ is a real root of $\kma$ and has
multiplicity 1. However, if $S = \sr$, then $\beta(S)$ is the
so-called null root of this affine Kac-Moody algebra, and has
multiplicity $l-1$ \cite[Chapter 7]{kac}. The number of $\pi \in \bl$
with $|\pi| = k $ is now $l \choose k$, for $2 \leq k \leq l$, and is
1 if $k=1$. Putting all this into equation \eqref{pkosteq}, one
obtains the well-known expression
$\pg = (q-1)^l + (-1)^l (q-1)$. 
\end{example}

\subsection{}
We recall that $\pg$ is monic of degree $l$ with coefficients that alternate in sign, and hence $\ptg := (-1)^l \,\pg[-q]$ has non-negative coefficients. The following is a pleasant consequence of Theorem \ref{pkost}.
\begin{corollary} \label{kostcor}
Let $G$ be a simple graph. Then
$\ptg = \kost[\,\beta(\sr)]$,  where $K(\, \cdot \, ; q)$ is the $q$-Kostant partition function of $\kma$.
 \end{corollary}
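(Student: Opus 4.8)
The plan is to unwind the definition of the $q$-Kostant partition function of $\kma$ and to check that, evaluated at $\beta(\sr)$, it collapses to exactly the sum produced by Theorem~\ref{pkost}(a) after the substitution $q \mapsto -q$.

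First I would recall that $\kost$ is defined by the generating-function identity
$$\prod_{\alpha \in \rop} \Bigl( \frac{1}{1 - q\, e^{-\alpha}} \Bigr)^{\mult \alpha} \;=\; \sum_{\beta} \kost \; e^{-\beta},$$
so that the coefficient of $q^k$ in $\kost$ is the number of ways of writing $\beta$ as an unordered sum of $k$ positive roots, where each occurrence of a root $\alpha$ is tagged by an element of a fixed basis of $\kma_\alpha$. The crux is the identity
\begin{equation}\label{kostcoreq}
\kost[\,\beta(\sr)] \;=\; \sum_{\pi \in \bl} \mult \pi \;\, q^{|\pi|}.
\end{equation}
To prove \eqref{kostcoreq}, suppose $\beta(\sr) = \sum_j \gamma_j$ with all $\gamma_j \in \rop$. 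Comparing, for each simple root $\alpha$, the coefficient of $\alpha$ on the two sides, and using that this coefficient equals $1$ in $\beta(\sr)$ and is a non-negative integer in each $\gamma_j$, we conclude that every $\gamma_j$ has all simple-root coefficients in $\{0,1\}$, that the supports $\supp \gamma_j$ are pairwise disjoint, and that their union is $\sr$. Hence $\gamma_j = \beta(S_j)$ where $S_j := \supp \gamma_j$, and $S_j$ induces a connected subgraph of $G$ because $\beta(S_j)$ is a root; thus $\pi := \{S_1,\dots,S_k\} \in \bl$. Conversely, every $\pi = \{S_1,\dots,S_k\} \in \bl$ gives the decomposition $\beta(\sr) = \sum_i \beta(S_i)$ into positive roots. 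Since the $S_i$ are nonempty and pairwise disjoint, the roots $\beta(S_i)$ are pairwise distinct and $\pi$ is recovered from the set $\{\beta(S_1),\dots,\beta(S_k)\}$, so distinct $\pi$ contribute to disjoint families of decompositions; and the number of basis-tagged decompositions attached to a given $\pi$ is $\prod_{i=1}^k \mult \beta(S_i) = \mult \pi$, each using $k = |\pi|$ roots. This proves \eqref{kostcoreq}.

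Finally I would combine \eqref{kostcoreq} with Theorem~\ref{pkost}(a): substituting $q \mapsto -q$ in \eqref{pkosteq} and multiplying through by $(-1)^l$, the sign attached to the $\pi$-term becomes $(-1)^l (-1)^{l-|\pi|} (-1)^{|\pi|} = 1$, whence $\ptg = (-1)^l \pg[-q] = \sum_{\pi \in \bl} \mult \pi \, q^{|\pi|} = \kost[\,\beta(\sr)]$. I expect the only genuine content to be the coefficient bookkeeping establishing \eqref{kostcoreq} — in particular the observation that $\beta(\sr)$ being multiplicity-free in the simple roots forces every summand of any positive-root decomposition to be $\beta(S)$ for a connected $S$ and forces these supports to partition $\sr$; the sign manipulation and the appeal to Theorem~\ref{pkost}(a) are then routine.
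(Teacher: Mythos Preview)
Your proof is correct and follows the same route as the paper: expand the defining product for $\kost[\,\beta(\sr)]$, use that $\beta(\sr)$ is multiplicity-free in the simple roots to force any contributing decomposition to come from an element of $\bl$ (the paper does this via the poset isomorphism $\bl\cong\pp$ of Lemma~\ref{posiso}), and then match with Theorem~\ref{pkost}(a) after the substitution $q\mapsto -q$. The paper's argument is a single sentence; you have simply written out the bookkeeping it leaves implicit.
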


The $q$-Kostant partition function $\kost$ is defined to be the coefficient of $e^{\beta}$ in the product 
\beq \label{qkosdef}
\prod_{\alpha \in \rop} \left( 1 - qe^{\alpha}\right)^{-\mult \alpha}.
\eeq
The universal enveloping algebra $U\nminus[+]$ has a natural
increasing filtration by the subspaces
$$\left(U\nminus[+]\right)^{\, \leqslant n} := \spann \{x_1x_2\cdots
x_k: \;  k \leqslant n \text{ and } x_j \in \nminus[+] \;
\forall j\}$$
for $n=0, 1, 2, \cdots$. Let us also set $
\left(U\nminus[+]\right)^{\, \leqslant n} = 0 $ for $n<0$. 
This so-called {\em degree filtration} induces a filtration on each
weight space of $U\nminus[+]$. A straightforward application of the Poincar\'{e}-Birkhoff-Witt
theorem shows that $\kost$ is precisely the
Hilbert series of the associated graded space of the degree filtration on 
$(U\nminus[+])_\beta$, the $\beta$-weight space of $U\nminus[+]$, i.e., 
$$\kost = \sum_{n \geq 0} \dim \left( \frac{(U\nminus[+])_\beta \cap \left(U\nminus[+]\right)^{\, \leqslant n} }{(U\nminus[+])_\beta \cap \left(U\nminus[+]\right)^{\, \leqslant n-1} } \right) \; q^n.$$

\medskip
We use this interpretation to obtain a further corollary of Theorem \ref{pkost} by considering special values of $q$. First, recall that $\ptg[1]$ is the number of acyclic orientations of $G$ \cite{stanley-acyc}, or equivalently the number of distinct Coxeter elements in the Weyl group $W(\kma)$ \cite{jyshi1}. Similarly, if $G$ is connected, then the coefficient of $q$ in $\ptg$ is the number of conjugacy classes of Coxeter elements in $W(\kma)$ \cite{jyshi2,erik2,mac-mor}. 

\begin{corollary} \label{kostliecorr}
Let $\beta :=\beta(\sr)$.
\be
\item The number of acylic orientations of $G$ equals $K(\beta)$, where  $K(\cdot)$ is the Kostant partition function of $\kma$. In other words, the number of distinct Coxeter elements in $W(\kma)$ equals $\dim \,(U\nminus[+])_\beta$.
\item  If $G$ is connected, then the number of conjugacy classes of Coxeter elements in $W(\kma)$ equals $\dim \, \nminus[+]_\beta$ ($= \mult \beta$).
\ee
\end{corollary}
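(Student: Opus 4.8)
The plan is to derive both parts directly from Corollary \ref{kostcor}, which identifies $\ptg$ with the $q$-Kostant partition function $\kost[\beta]$ for $\beta=\beta(\sr)$, together with the externally cited combinatorial facts about $\ptg[1]$ and about the coefficient of $q$ in $\ptg$ that were recalled just before the statement. No new machinery is needed; the two parts are obtained by specializing $q=1$ and by extracting the linear coefficient in $q$, respectively.

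For part (1), I would specialize $q=1$ in $\kost[\beta]$. By definition $\kost[\beta]$ is the Hilbert series of the degree filtration on the finite-dimensional weight space $(U\nminus[+])_\beta$; since that space is finite-dimensional, setting $q=1$ collapses the filtration and returns $\dim (U\nminus[+])_\beta$. Comparing with the classical PBW description of $U\nminus[+]$, this number is exactly the value $K(\beta)$ of the ordinary Kostant partition function, i.e. the number of ways of writing $\beta$ as an unordered sum of positive roots counted with multiplicity. Hence Corollary \ref{kostcor} gives $\ptg[1]=K(\beta)=\dim (U\nminus[+])_\beta$. Combining this with Stanley's theorem that $\ptg[1]$ counts the acyclic orientations of $G$, and Shi's identification of that count with the number of distinct Coxeter elements in $W(\kma)$, yields part (1).

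For part (2), assume $G$ is connected, so that $\beta=\beta(\sr)$ is genuinely a root of $\kma$ (by the connectivity criterion for $\mult\beta(S)>0$ recalled in the introduction), and hence $\mult\beta=\dim\nminus[+]_\beta>0$. I would read off the coefficient of $q$ in the defining product \eqref{qkosdef}: each factor expands as $(1-qe^{-\alpha})^{-\mult\alpha}=1+\mult\alpha\,q\,e^{-\alpha}+O(q^2)$, so the full product equals $1+q\sum_{\alpha\in\rop}\mult\alpha\,e^{-\alpha}+O(q^2)$. Therefore the coefficient of $q\,e^{-\beta}$ in $\kost[\beta]$ is $\mult\beta$, since among the linear-in-$q$ terms only the single factor indexed by $\alpha=\beta$ can contribute an $e^{-\beta}$. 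By Corollary \ref{kostcor} the coefficient of $q$ in $\ptg$ is then $\mult\beta=\dim\nminus[+]_\beta$, and comparison with the cited result that this coefficient counts the conjugacy classes of Coxeter elements in $W(\kma)$ completes the proof.

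I do not anticipate a genuine obstacle: all the substance is in Corollary \ref{kostcor} and the cited identities of Stanley, Shi, and the others. The only points demanding a little care are the legitimacy of the $q=1$ specialization of the Hilbert series (valid precisely because each weight space is finite-dimensional) and the bookkeeping in the coefficient extraction, where one must observe that a contribution to $e^{-\beta}$ that is linear in $q$ can arise only from the factor attached to $\beta$ itself.
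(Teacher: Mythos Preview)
Your proposal is correct and follows essentially the same approach as the paper: the paper's proof simply notes that the corollary follows from the observations that (i) the $q$-Kostant partition function reduces to the ordinary Kostant partition function at $q=1$, and (ii) the coefficient of $q$ in $K(\beta;\,q)$ is $\mult\beta$. Your write-up is a more detailed and careful unpacking of exactly these two observations, together with the same external inputs (Stanley, Shi, etc.) already invoked in the introduction.
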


Theorem \ref{pkost} and its corollaries are proved in section \ref{two}.
The key ingredient in the proof is Lemma \ref{keylemma} below, which
is a special case of a result proved in \cite{rvsv} in the context of
unique factorization of tensor products for Kac-Moody algebras. 
In fact, it was the occurrence of the {\em deletion-contraction}
recurrence in \cite{rvsv} that suggested a possible connection of those
ideas to chromatic polynomials.

\subsection{}
We note that equation \eqref{pkosteq} closely resembles 
the classical result of Birkhoff and Whitney which essentially states that
\beq\label{birwhit}
\pg = \sum_{\pi \in \bl} \mob(\zhat,\pi) \, q^{|\pi|}
\eeq
where $\mob$ is the  M\"{o}bius function of $\bl$. 
The following proposition (proved in section \ref{two}) clarifies the relation between the two.

\begin{proposition} \label{mobmult}
For all $\pi \in \bl$, $\mob(\zhat,\pi) =  (-1)^{l - |\pi|} \,\mult \pi$.
\end{proposition}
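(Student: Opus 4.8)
The plan is to deduce the proposition by comparing Theorem~\ref{pkost}(b) with the classical Birkhoff--Whitney formula~\eqref{birwhit}, applied not to $G$ itself but to the induced subgraphs $G(\Sigma)$, and then reading off a single coefficient.

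The first step is a purely order-theoretic observation: for $\Sigma = \{S_1,\dots,S_k\} \in \bl$, the bond lattice $\bl[G(\Sigma)]$ is precisely the interval $[\zhat, \Sigma]$ of $\bl$. Indeed, a partition $\pi$ of $\sr$ has all of its blocks connected in $G(\Sigma)$ iff each block is contained in some $S_i$ (a block meeting two distinct $S_i$ is disconnected in $G(\Sigma)$, which has no edges between different $S_i$) and is connected in $G$ (since $G$ and $G(\Sigma)$ induce the same subgraph on each $S_i$); that is, iff $\zhat \le \pi \le \Sigma$. Both $\bl[G(\Sigma)]$ and $\bl$ are ranked by $\pi \mapsto l - |\pi|$, the bottom element of $\bl[G(\Sigma)]$ is $\zhat$, and the M\"obius function of an interval depends only on that interval, so $\mu_{\bl[G(\Sigma)]}(\zhat, \pi) = \mob(\zhat, \pi)$ for all $\pi \le \Sigma$. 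Since $G(\Sigma)$ is a simple graph on $l$ vertices, \eqref{birwhit} applied to it reads
\[
\pgsig{\Sigma} \;=\; \sum_{\zhat \le \pi \le \Sigma} \mob(\zhat,\pi)\, q^{|\pi|}.
\]

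The second step combines this with \eqref{pkosteq2} of Theorem~\ref{pkost}(b) to obtain, for each fixed $\Sigma \in \bl$, the polynomial identity
\[
\sum_{\zhat \le \pi \le \Sigma} \mob(\zhat,\pi)\, q^{|\pi|} \;=\; \sum_{\zhat \le \pi \le \Sigma} (-1)^{l-|\pi|}\,\mult\pi\; q^{|\pi|}.
\]
Now observe that on the interval $[\zhat,\Sigma]$ the function $\pi \mapsto |\pi|$ attains its minimum value $|\Sigma|$ uniquely at $\pi = \Sigma$, because a refinement of $\Sigma$ with the same number of blocks must equal $\Sigma$. Hence the coefficient of the lowest-order term $q^{|\Sigma|}$ on each side receives a contribution only from $\pi = \Sigma$, and equating these coefficients gives $\mob(\zhat,\Sigma) = (-1)^{l - |\Sigma|}\,\mult\Sigma$. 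As $\Sigma$ ranges over all of $\bl$, this is exactly the assertion of the proposition; in particular it exhibits \eqref{pkosteq} and \eqref{birwhit} as agreeing term by term.

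The only step requiring any care is the identification $\bl[G(\Sigma)] = [\zhat,\Sigma]$ together with the compatibility of the rank and M\"obius functions; everything else is formal. (Alternatively one could induct on $l - |\Sigma|$, using that every $\pi$ with $\zhat \le \pi < \Sigma$ has $|\pi| > |\Sigma|$ and is therefore already matched by the inductive hypothesis, but extracting the lowest coefficient makes the induction unnecessary.)
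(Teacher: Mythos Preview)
Your argument is correct. The identification $\bl[G(\Sigma)] = [\zhat,\Sigma]$ and the invariance of the M\"obius function under this identification are unproblematic, and extracting the coefficient of $q^{|\Sigma|}$ is a clean way to isolate the $\pi=\Sigma$ term.

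The paper, however, takes a different route: rather than invoking the Birkhoff--Whitney formula~\eqref{birwhit} at all, it verifies directly that the function $\pi \mapsto (-1)^{l-|\pi|}\,\mult\pi$ satisfies the defining recursion of the M\"obius function, namely $\mob(\zhat,\zhat)=1$ and $\sum_{\zhat\le\pi\le\Sigma}\mob(\zhat,\pi)=0$ for $\Sigma\neq\zhat$. The first is immediate since simple roots have multiplicity~$1$; for the second, the paper evaluates Theorem~\ref{pkost}(b) at $q=1$, obtaining $\pgsig[1]{\Sigma}$, and then observes that this vanishes whenever $\Sigma\neq\zhat$ because some block $S_j$ has an edge and hence $\pgsig[1]{S_j}=0$. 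So the paper's proof is self-contained from Theorem~\ref{pkost}(b) and does not appeal to~\eqref{birwhit}, whereas your proof uses~\eqref{birwhit} as a black box but then gets the conclusion by a one-line coefficient comparison. Your approach also makes transparent that \eqref{pkosteq} and \eqref{birwhit} agree term by term, which in the paper is left as a consequence rather than the mechanism of proof.
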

Thus, the absolute value of the M\"{o}bius function is a product of
certain root multiplicities of $\kma$. This provides an interesting Lie algebraic interpretation of the  M\"{o}bius function.

\subsection{}
Now, root multiplicities are themselves quite mysterious in general (except when $\kma$ is of finite or affine type), and it is natural to wonder if this  interpretation sheds any further light on the chromatic polynomial. However, an important property of root multiplicities is that they satisfy the so-called {\em Peterson recurrence} (see section \ref{three} below). This, together with Theorem \ref{pkost}, allows us to find a new realization of the chromatic polynomial, as a weighted generating function of paths in the bond lattice. 

In order to describe this realization more precisely, we require some definitions. 
 For a subset $S$ of $\sr$, let $\ed(S)$ denote the number of edges in the subgraph induced by $S$ (i.e., the number of edges of $\mcg$ both of whose ends are in $S$). 
 Given $\pi \in \bl$, let $d(\pi)$ denote the number of non-singleton subsets in the partition $\pi$, i.e., if $\pi = \{S_1, S_2, \cdots, S_k\}$, then 
$$ d(\pi) := \# \{1 \leq i \leq k: |S_i| > 1\}.$$
Observe that $d(\pi) =0 $ iff $\pi = \zhat$.

The bond lattice $\bl$ can be thought of as a directed graph, with directed edges given by the covering relations, i.e., given $\pi, \pi^\prime \in \bl$, 
we draw an edge from $\pi$ to $\pi^\prime$ iff $\pi \covers \pi^\prime$. 
We observe that if $\pi \covers \pi^\prime$, then $|\pi^\prime| = |\pi|+1$; further, we can write 
$\pi=\{S_1, S_2, \cdots, S_k\}$ and $\pi^\prime =\{S^\prime_1, S^\prime_2, \cdots, S^\prime_{k+1}\}$ with $S_i = S^\prime_i$ for $1 \leq i < k$ and $ S_k =  S^\prime_{k} \sqcup S^\prime_{k+1}$. Define a (rational valued) weight function on the edge $\pi \covers \pi^\prime$ of $\bl$ as follows:
$$ 
w(\pi, \pi^\prime):= 
\frac{1}{d(\pi)} \frac{\ed(S^\prime_{k},S^\prime_{k+1})}{\ed(S_k)}
=\frac{1}{d(\pi)} \left( 1 - \frac{\ed(S^\prime_{k})}{\ed(S_k)} - \frac{\ed(S^\prime_{k+1})}{\ed(S_k)}\right)
$$
 where $\ed(S^\prime_{k},S^\prime_{k+1})$ is the number of edges of $\mcg$ which straddle $S^\prime_{k}$ and $S^\prime_{k+1}$, i.e., one end of which lies in $S^\prime_{k}$ and the other in $S^\prime_{k+1}$.
Observe that $\pi$ covers an element of $\bl$ implies that $\pi \neq \zhat$, and hence $d(\pi)\neq 0$. Since $S_{k}$, $S^\prime_{k}$ and $S^\prime_{k+1}$ induce connected subgraphs of $G$, it is clear that $0 < w(\pi, \pi^\prime) \leq 1$, and 
$w(\pi, \pi^\prime) = 1$ iff $\pi^\prime =\zhat$.

Now, given a (directed) path $p$ in $\bl$, say $p: \pi_1 \covers \pi_2 \covers \cdots \covers \pi_r$, we let $\start(p) :=\pi_1$, $\eend(p) := \pi_r$ and $\len(p) := r-1$. Define the weight of $p$ to be the product of the weights of edges in $p$, i.e., 
$$ w(p) := \prod_{i=1}^{r-1} w(\pi_i,\pi_{i+1}).$$ 
If $r=1$, i.e., $p$ is a path of length zero, then this is an empty product, and $w(p):=1$. We now  have the following proposition, which arises from an
 iterated application of the Peterson recurrence formula. The proof appears in section \ref{three}.

\begin{proposition}\label{mpipath}
Let $\pi \in \bl$. Then 
\beq \label{mpi}
\mult \pi = \sum_{\substack{p \text{ path in } \bl \\ \start(p) = \pi \\ \eend(p) = \zhat}} w(p).
\eeq
\end{proposition}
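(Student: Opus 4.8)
The plan is to induct on the rank $l - |\pi|$ of $\pi$ in the bond lattice $\bl$. The base case is $\pi = \zhat$, where both sides of \eqref{mpi} equal $1$ (the unique path from $\zhat$ to $\zhat$ has length zero). For the inductive step, note that every covering relation $\pi \covers \pi'$ satisfies $|\pi'| = |\pi| + 1$, so every path from $\pi$ to $\zhat$ has length exactly $l - |\pi|$ and, when $\pi \neq \zhat$, factors uniquely as a cover $\pi \covers \pi'$ followed by a path from $\pi'$ to $\zhat$; moreover the weight of the path is the product of $w(\pi,\pi')$ with the weight of the tail. Substituting the inductive hypothesis for $\mult\pi'$ and regrouping the sum according to this factorization, the proposition reduces to the single-step identity
\beq\label{onestep}
\mult \pi \;=\; \sum_{\pi'\,:\;\pi \,\covers\, \pi'} w(\pi, \pi')\, \mult \pi'. \tag{$\star$}
\eeq

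To prove \eqref{onestep}, I would first record the Peterson recurrence in the form $(\beta \mid \beta - 2\rho)\, c_\beta = \sum_{\beta' + \beta'' = \beta} (\beta' \mid \beta'')\, c_{\beta'} c_{\beta''}$, the sum being over nonzero $\beta', \beta'' \in Q_+$ (the non-negative integer span of the simple roots) with $\beta' + \beta'' = \beta$, where $c_\gamma := \sum_{k \geq 1} \tfrac1k \mult(\gamma/k)$ and $(\cdot \mid \cdot)$ is the invariant form normalized so that $(\alpha_i \mid \alpha_i) = 2$. Apply this to $\beta = \beta(S)$ for a connected $S \subset \sr$ with $|S| > 1$. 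Since $\beta(S)$ is multiplicity-free as a sum of simple roots, $\beta(S)/k \in Q_+$ only for $k = 1$, so $c_{\beta(S)} = \mult\beta(S)$; and every admissible splitting is $\beta' = \beta(T)$, $\beta'' = \beta(S\setminus T)$ for a nonempty proper $T \subsetneq S$, with $c_{\beta(T)} = \mult\beta(T)$ and similarly for $\beta''$. A direct computation with $B = 2I - A$ gives $(\beta(S) \mid \beta(S) - 2\rho) = -2\,\ed(S)$ and $(\beta(T) \mid \beta(S \setminus T)) = -\,\ed(T, S \setminus T)$. Dividing through, and passing from ordered to unordered splittings (a factor of $2$), the recurrence becomes the ``single-block'' formula
$$\mult\beta(S) \;=\; \frac{1}{\ed(S)} \sum_{\{T,\,T'\}} \ed(T, T')\, \mult\beta(T)\, \mult\beta(T'),$$
summed over unordered partitions of $S$ into two nonempty parts; terms with $T$ or $T'$ disconnected vanish, so they may be included or not.

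Finally, \eqref{onestep} follows by applying this single-block formula to each of the $d(\pi)$ non-singleton blocks of $\pi = \{S_1, \dots, S_k\}$ and averaging. Splitting a non-singleton block $S_j$ as $T \sqcup T'$ is exactly the data of a cover $\pi \covers \pi'$, with $\mult\pi' = \big(\prod_{i \neq j} \mult\beta(S_i)\big)\mult\beta(T)\,\mult\beta(T')$ and $w(\pi,\pi') = \tfrac{1}{d(\pi)}\,\tfrac{\ed(T,T')}{\ed(S_j)}$; summing over all splittings of $S_j$ and invoking the single-block formula collapses the inner sum to $\mult\beta(S_j)$, so the block $S_j$ contributes $\tfrac1{d(\pi)}\prod_i \mult\beta(S_i) = \tfrac1{d(\pi)}\mult\pi$, and summing over the $d(\pi)$ non-singleton blocks yields $\mult\pi$. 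I expect the main difficulty to be purely organizational: keeping ordered versus unordered splittings consistent between the Peterson sum and the covering relations, getting the $1/d(\pi)$ averaging right, and checking edge cases --- that $\ed(S_j) > 0$ whenever $S_j$ is connected with $|S_j| > 1$ (so nothing is divided by zero, and simultaneously $(\beta(S_j)\mid\beta(S_j)-2\rho) \neq 0$, which is what lets us solve the recurrence for $\mult\beta(S_j)$), and that including the disconnected splittings is harmless since their contribution is zero.
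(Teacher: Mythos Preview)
Your proposal is correct and follows essentially the same route as the paper: the paper likewise reduces the path formula (by iteration, i.e.\ your induction on rank) to the single-step identity $\mult\gamma=\sum_{\gamma\covers\gamma'}w(\gamma,\gamma')\,\mult\gamma'$, which it states as a lemma and proves exactly as you do---by specializing the Peterson recurrence to $\beta=\beta(S)$ (using that $\beta(S)/k\notin Q_+$ for $k>1$ so $c_{\beta(S)}=\mult\beta(S)$), computing $(\beta\mid\beta-2\rho)=-2\,\ed(S)$ and $(\beta'\mid\beta'')=-\,\ed(T,T')$, and then averaging the resulting single-block formula over the $d(\pi)$ non-singleton blocks. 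The only cosmetic difference is that the paper phrases everything in the isomorphic poset $\pp$ of root tuples and then transfers back to $\bl$.
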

The terms appearing on the right hand side of equation \eqref{mpi} are all rationals between 0 and 1, and it seems somewhat remarkable that their sum finally works out to be an integer for every $\pi$.
If $p$ is a path from $\pi$ to $\zhat$, then observe $|\pi| = l - \len(p)$.
Thus, Theorem \ref{pkost} and proposition \ref{mpipath} imply the following new realization of the chromatic polynomial.
\begin{theorem}\label{mainthm}
$$\pg = \sum_{\substack{p \text{ path in } \bl \\ \eend(p) = \zhat}} (-1)^{\len(p)} \, w(p) \, q^{l - \len(p)}.$$
\end{theorem}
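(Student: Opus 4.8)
The plan is to obtain Theorem \ref{mainthm} purely formally, by substituting the path expansion of $\mult \pi$ supplied by Proposition \ref{mpipath} into the expression for $\pg$ given by Theorem \ref{pkost}(a), and then reindexing the resulting double sum. Explicitly, Theorem \ref{pkost}(a) reads $\pg = \sum_{\pi \in \bl} (-1)^{l - |\pi|}\, \mult \pi \; q^{|\pi|}$, while Proposition \ref{mpipath} rewrites each $\mult \pi$ as $\sum_{p} w(p)$, the sum running over all directed paths $p$ in $\bl$ with $\start(p) = \pi$ and $\eend(p) = \zhat$. Plugging the second identity into the first produces a sum indexed by pairs $(\pi, p)$, where $\pi \in \bl$ and $p$ is a path from $\pi$ to $\zhat$.

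The only point demanding attention is the bookkeeping of exponents. If $p : \pi = \pi_1 \covers \pi_2 \covers \cdots \covers \pi_r = \zhat$ is such a path, then, as noted in the discussion preceding Proposition \ref{mpipath}, each covering step $\pi_i \covers \pi_{i+1}$ increases the number of blocks by exactly one, so $l = |\zhat| = |\pi| + (r-1)$, i.e.\ $|\pi| = l - \len(p)$. Consequently $(-1)^{l - |\pi|}\, q^{|\pi|} = (-1)^{\len(p)}\, q^{l - \len(p)}$, an expression depending only on $p$. Since a path $p$ in $\bl$ determines its source $\start(p)$ uniquely, summing first over $\pi \in \bl$ and then over paths from $\pi$ to $\zhat$ is the same as summing over all paths $p$ in $\bl$ with $\eend(p) = \zhat$, with $\pi = \start(p)$ recovered from $p$. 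This reindexing collapses the double sum to $\sum_{p:\,\eend(p)=\zhat} (-1)^{\len(p)}\, w(p)\, q^{l - \len(p)}$, which is the asserted formula.

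I do not anticipate a genuine obstacle: all the analytic substance is already carried by Theorem \ref{pkost} and Proposition \ref{mpipath}, and $\bl$ being a finite poset the sums are finite, so the conclusion is an identity of polynomials in $q$ obtained by the substitution and reindexing above. As a consistency check, the unique length-zero path $p : \zhat$ contributes $(-1)^0\, w(p)\, q^{l} = q^l$, recovering the monic leading term of $\pg$, and since every path in $\bl$ has length at most $l-1$ there is no $q^0$ term, matching $\pg[0] = 0$.
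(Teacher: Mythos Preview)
Your proposal is correct and follows exactly the paper's own approach: the paper derives Theorem~\ref{mainthm} by substituting the path expansion of $\mult\pi$ from Proposition~\ref{mpipath} into Theorem~\ref{pkost}(a) and using the observation $|\pi| = l - \len(p)$. Your write-up is, if anything, more detailed than the paper's one-line deduction.
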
 

Next, we define a square matrix $Z$ of order $L:=|\bl|$ (with rows and columns indexed by elements of $\bl$) as follows:
$$ Z_{\pi, \pi^\prime} :=
\begin{cases} 
- w(\pi,\pi^\prime) & \text{if } \pi \covers \pi^\prime \\
\;\;\;\;\; q & \text{if } \pi = \pi^\prime = \zhat \\
\;\;\;\;\; 0 & \text{otherwise}.
\end{cases}$$
It is clear that $Z$ is an upper triangular matrix (after rearranging the elements of $\bl$ in decreasing order with respect to a linear extension) and has diagonal entries $0,0,\cdots, 0$ ($L-1$ times) and $q$.

Let $\zeta$ denote the column vector of length $L$ all of whose entries are 1. Then it is easy to see that 
the following is an alternative formulation of Theorem \ref{mainthm}.
\begin{corollary} \label{mainthmcor}
$\pg = \zeta^{\scriptscriptstyle T} \, Z^l \, \zeta$.
\end{corollary}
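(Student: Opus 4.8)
The plan is to identify $\zeta^{\scriptscriptstyle T} W^l \zeta$ with a weighted count of walks of length $l$ in the directed graph on $\bl$ carrying the nonzero entries of $W$, and then match this count term by term with the expression for $\pg$ in Theorem~\ref{mainthm}. First I would recall the standard fact that, for a square matrix $W$ with rows and columns indexed by $\bl$, the entry $(W^l)_{\pi,\pi'}$ equals $\sum W_{\pi,\tau_1} W_{\tau_1,\tau_2}\cdots W_{\tau_{l-1},\pi'}$, summed over all sequences $\pi=\tau_0,\tau_1,\dots,\tau_l=\pi'$ in $\bl$; since $\zeta$ is the all-ones vector, this gives $\zeta^{\scriptscriptstyle T} W^l \zeta = \sum_{\pi,\pi'\in\bl}(W^l)_{\pi,\pi'}$, the total weight of all length-$l$ walks in $\bl$, where the weight of a walk is the product of the $W$-entries along its steps.

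Next I would pin down the shape of such walks using the special form of $W$: its nonzero off-diagonal entries sit exactly on the covering edges $\pi\covers\pi'$, with value $-w(\pi,\pi')$, and its only nonzero diagonal entry is the self-loop at $\zhat$, with value $q$. A step along a covering edge increases $|\cdot|$ by one, hence decreases the rank $l-|\cdot|$ by one; so a walk of length $l$ made up entirely of covering edges would require $l$ strict descents starting from an element of rank at most $l-1$, which is impossible. Therefore every length-$l$ walk starting at $\pi$ must pass through $\zhat$, and since $\zhat$ has no outgoing covering edge, it can only repeat the self-loop once there. The first passage to $\zhat$ uses only covering edges, and descending from rank $l-|\pi|$ to rank $0$ takes exactly $l-|\pi|$ of them; so the walk is uniquely the datum of a path $p$ of covering edges from $\pi$ to $\zhat$ (necessarily $\len(p)=l-|\pi|$) followed by exactly $|\pi|$ repetitions of the self-loop at $\zhat$. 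Its weight is therefore $(-1)^{\len(p)}\,w(p)\,q^{|\pi|}=(-1)^{\len(p)}\,w(p)\,q^{\,l-\len(p)}$, the sign collecting the $\len(p)$ factors $-w(\pi_i,\pi_{i+1})$ and $w(p)$ being their product.

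Finally, summing over all $\pi$ and using that $(W^l)_{\pi,\pi'}=0$ unless $\pi'=\zhat$, I get
\[
\zeta^{\scriptscriptstyle T} W^l \zeta \;=\; \sum_{\pi\in\bl}(W^l)_{\pi,\zhat}
\;=\; \sum_{\substack{p\text{ path in }\bl\\ \eend(p)=\zhat}} (-1)^{\len(p)}\,w(p)\,q^{\,l-\len(p)}
\;=\; \pg,
\]
the last equality being Theorem~\ref{mainthm}. I expect no genuine obstacle: the whole argument is bookkeeping with walks. The one point that needs a moment's care is the structural claim that every length-$l$ walk terminates at $\zhat$; if one prefers, this can be packaged algebraically by writing $W=N+q\,E_{\zhat}$, where $E_{\zhat}$ is the matrix with a single $1$ in position $(\zhat,\zhat)$ and $N$ records the covering-edge weights. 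Then $E_{\zhat}N=0$ (row $\zhat$ of $N$ is zero) and $N$ is nilpotent with $N^l=0$ (any chain in $\bl$ has length at most $l-1$), so expanding the power collapses to $W^l=\sum_{0\le a\le l} q^{\,l-a}\,N^a E_{\zhat}$; one then computes $\zeta^{\scriptscriptstyle T} N^a E_{\zhat}\zeta$ as the weighted number of length-$a$ paths in $\bl$ ending at $\zhat$ and recovers the same sum.
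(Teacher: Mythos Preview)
Your argument is correct and is exactly the unpacking the paper has in mind: it states only that the corollary ``is easy to see'' as an alternative formulation of Theorem~\ref{mainthm}, and your walk analysis (or, equivalently, the decomposition $W=N+qE_{\zhat}$ with $E_{\zhat}N=0$ and $N^l=0$) supplies precisely those omitted details. One tiny quibble: in the algebraic expansion the $a=l$ term of $(N+qE_{\zhat})^l$ is $N^l$ rather than $N^lE_{\zhat}$, but since both vanish this does not affect the conclusion.
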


The rest of the paper is organized as follows. Section \ref{two} contains preliminaries about Kac-Moody algebras, and the proofs of Theorem \ref{pkost}, proposition \ref{mobmult} and corollaries \ref{kostcor}, \ref{kostliecorr}. In section \ref{three}, we describe the Peterson recurrence formula, and use it to prove proposition \ref{mpipath}.

\section{Proof of Theorem \ref{pkost}}\label{two}

\subsection{} 
We freely use the notations of the introduction in the rest of the paper. Thus, $\kma$ will denote the Kac-Moody algebra with Dynkin diagram $\mcg$. Let $\csa$ be its Cartan subalgebra and $\rop$ the set of positive roots. The simple roots of $\kma$ will be identified with $\Pi$. The Weyl group $W$ of $\kma$ is the subgroup of $\mathrm{GL}(\csa^*)$ generated by the simple reflections $\{s_\alpha: \alpha \in \sr\}$. We let $Q_+ := \oplus_{\alpha \in \Pi} \integers_{\geq 0} \alpha$ be the positive part of the root lattice. We also have a natural non-degenerate symmetric bilinear form on $\csa^*$ \cite{kac}. On the simple roots, it is given by $\form{\alpha}{\beta} = 2$ if $\alpha = \beta$, $-1$ if $\alpha$ and $\beta$ are adjacent vertices in $\mcg$, and $0$ otherwise.
We let $\rho \in \csa^*$ denote the Weyl vector of $\kma$; this satisfies $\form{\rho}{\alpha} = 1$ for all $\alpha \in \Pi$.

\subsection{} Let $\pp$ denote the collection of sets $\{\beta_1,\cdots, \beta_k\}$ such that (i) each $\beta_i \in \rop$ and 
(ii) $\sum \beta_i = \beta(\sr)$. We partially order $\pp$ by refinement, i.e., given $\gamma = \{\beta_1,\cdots, \beta_k\}$ and  $\gamma' = \{\beta'_1,\cdots, \beta'_r\}$ in $\pp$, define $\gamma > \gamma'$ if there exist pairwise
disjoint sets $K_i \, (1 \leq i \leq k)$ such that $\bigcup K_i = \{1,\cdots,r\}$ and $\beta_i = \sum_{j \in K_i} \beta'_j$ for all $1 \leq i \leq k$. The covering relation in $\pp$ is thus obtained by $\gamma \covers \gamma'$ iff $r=k+1$ and (after possibly reordering indices) $\beta_i = \beta'_i$ for $1 \leq i < k$ and $\beta_k = \beta'_k + \beta'_{k+1}$.

We make the following simple observation.
\begin{lemma}\label{posiso}
The map $\phi: \bl \to \pp$ defined by $\{S_1, \cdots, S_k\} \mapsto \{\beta(S_1), \cdots, \beta(S_k)\}$ is an isomorphism of posets.
\end{lemma}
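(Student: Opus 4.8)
The plan is to show that $\phi$ is a well-defined bijection and that both $\phi$ and $\phi^{-1}$ preserve the order. First I would check well-definedness: if $\pi = \{S_1,\dots,S_k\} \in \bl$, then each $S_i$ induces a connected subgraph of $G$, so (by the fact recalled in the introduction) $\mult \beta(S_i) > 0$, i.e., each $\beta(S_i) \in \rop$; and $\sum_i \beta(S_i) = \sum_i \sum_{\alpha \in S_i} \alpha = \sum_{\alpha \in \sr} \alpha = \beta(\sr)$ since the $S_i$ partition $\sr$. Hence $\phi(\pi) \in \pp$. (One should note the $\beta(S_i)$ are pairwise distinct, since the $S_i$ are disjoint and nonempty, so $\phi(\pi)$ really is a $k$-element set.)

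Next I would show $\phi$ is injective, which is the crux of the argument. The key point is that a positive root $\gamma \in \rop$ has connected support: writing $\gamma = \sum_{\alpha \in \sr} c_\alpha \alpha \in Q_+$, the set $\supp \gamma := \{\alpha : c_\alpha \neq 0\}$ induces a connected subgraph of $G$ (this is standard for Kac-Moody algebras, cf. \cite{kac}). Now if $S \subset \sr$ induces a connected subgraph, then $\beta(S) = \sum_{\alpha \in S} \alpha$ has all coefficients equal to $1$, so $\supp \beta(S) = S$; thus $S$ is recovered from $\beta(S)$ as its support. Consequently, from $\phi(\pi) = \{\beta(S_1),\dots,\beta(S_k)\}$ we recover $\pi = \{\supp \beta(S_1), \dots, \supp \beta(S_k)\}$, giving injectivity. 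For surjectivity, given $\{\beta_1,\dots,\beta_k\} \in \pp$, set $S_i := \supp \beta_i$. Each $\beta_i \in \rop$ so $S_i$ induces a connected subgraph; moreover $\sum_i \beta_i = \beta(\sr)$ forces each coefficient on the left to sum to $1$, so each $\beta_i$ has all coefficients in $\{0,1\}$ (hence $\beta_i = \beta(S_i)$) and the $S_i$ are pairwise disjoint with union $\sr$. Thus $\{S_1,\dots,S_k\} \in \bl$ and $\phi$ maps it to $\{\beta_1,\dots,\beta_k\}$.

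Finally I would verify that $\phi$ is an order isomorphism. Since $\phi$ is a bijection, it suffices to check that $\pi > \pi'$ in $\bl$ iff $\phi(\pi) > \phi(\pi')$ in $\pp$, and here it is cleanest to compare the covering relations. If $\pi \covers \pi'$ with $\pi = \{S_1,\dots,S_k\}$, $\pi' = \{S_1,\dots,S_{k-1},S_k',S_{k+1}'\}$ and $S_k = S_k' \sqcup S_{k+1}'$, then applying $\phi$ and using $\beta(S_k) = \beta(S_k') + \beta(S_{k+1}')$ shows $\phi(\pi) \covers \phi(\pi')$ by the description of covers in $\pp$. Conversely, if $\phi(\pi) \covers \phi(\pi')$, applying $\phi^{-1}$ (i.e., taking supports) and noting that $\supp(\beta(S_k') + \beta(S_{k+1}')) = S_k' \sqcup S_{k+1}'$ when these are disjoint connected sets, recovers $\pi \covers \pi'$. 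Since both posets are ranked by number of parts and $\phi$ preserves the number of parts, the partial orders (being the transitive closures of their covering relations) correspond under $\phi$.

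The main obstacle is the injectivity/surjectivity step, which hinges entirely on the structural fact that positive roots of $\kma$ have connected support and that $\beta(S)$ for a connected set $S$ has support exactly $S$ — everything else is bookkeeping. I would state the connected-support fact explicitly as the one external input, citing \cite{kac}.
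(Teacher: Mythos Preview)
Your proof is correct and follows the same approach as the paper's, which simply recalls that $S$ is a connected subset of $\sr$ iff $\beta(S)\in\rop$ and then declares the isomorphism ``clear.'' You have carefully filled in the details the paper omits (well-definedness, bijectivity via supports, and correspondence of covering relations), and your identification of the connectedness--root equivalence as the one external input matches the paper's proof exactly.
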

\begin{proof}
We recall that $S$ is a connected subset of $\sr$ iff $\beta(S) \in \rop$. Thus $\phi$ is well defined. The fact that it is an isomorphism is clear.
\end{proof}
We can thus identify $\bl$ and $\pp$. We will let $\zhat$ also denote the unique minimal element of $\pp$ (the set of all simple roots of $\kma$). For $\gamma =\{\beta_1, \cdots, \beta_k\} \in \pp$, we  let $\mult \gamma := \prod_{i=1}^k \mult \beta_i$.

The Weyl-Kac denominator formula gives:
\beq\label{wk}
 U_0 := \sum\limits_{w \in W} \varepsilon(w) e^{w\rho - \rho} = \prod\limits_{\alpha \in \rop} (1 - e^{-\alpha})^{\mult \alpha},
\eeq
where $\varepsilon$ is the sign character of $W$.
\begin{proposition} \label{lemco}
The coefficient of $e^{-\beta(\sr)}$ in $U_0^q$ equals $(-1)^l \, \pg$.
\end{proposition}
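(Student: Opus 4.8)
The plan is to compute the coefficient of $e^{-\beta(\sr)}$ using the left-hand (Weyl group) side of the denominator formula \eqref{wk} and to match it against the combinatorial formula \eqref{cpe} for $\pg$. Write $U_0 = 1 - F$, where $F := -\sum_{w \in W \setminus \{1\}} \varepsilon(w)\, e^{w\rho - \rho}$ is a power series with vanishing constant term. For a positive integer $q$ we may expand $U_0^q = (1-F)^q = \sum_{j \ge 0} \binom{q}{j}(-1)^j F^j$; since $\beta(\sr)$ is a sum of the $l$ simple roots, only the terms with $j \le l$ contribute to the coefficient of $e^{-\beta(\sr)}$, so every sum in sight is finite. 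The whole computation then reduces to understanding which monomials $e^{-\mu}$, $\mu \in Q_+$, occur in $F$, and in particular the coefficient of $e^{-\beta(S)}$ for $S \subseteq \sr$.

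The key step is the claim: for nonempty $S \subseteq \sr$, the coefficient of $e^{-\beta(S)}$ in $F$ equals $-(-1)^{|S|}$ if $S$ is independent in $G$, and $0$ otherwise. This coefficient is $-\sum \varepsilon(w)$ over $w \in W$ with $w\rho = \rho - \beta(S)$. If such a $w$ exists, then $W$-invariance of the bilinear form gives $\form{\rho - \beta(S)}{\rho - \beta(S)} = \form{\rho}{\rho}$, i.e. $\form{\beta(S)}{\beta(S)} = 2\form{\rho}{\beta(S)}$; since $\form{\rho}{\gamma} = 1$ for each $\gamma \in \sr$ the right-hand side is $2|S|$, while the stated values of the form on simple roots give $\form{\beta(S)}{\beta(S)} = 2|S| - 2\,\ed(S)$. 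Hence $\ed(S) = 0$, i.e. $S$ is independent; this yields the vanishing in the non-independent case. Conversely, if $S$ is independent the simple reflections $\{s_\gamma : \gamma \in S\}$ pairwise commute, so $w_S := \prod_{\gamma \in S} s_\gamma$ is a well-defined element with $w_S \rho = \rho - \beta(S)$ and $\varepsilon(w_S) = (-1)^{|S|}$; and $w_S$ is the \emph{unique} solution, because $\rho$ is regular and hence has trivial stabilizer in $W$. So the coefficient is $-\varepsilon(w_S) = -(-1)^{|S|}$.

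With this in hand the assembly is routine. A product $e^{-\mu_1}\cdots e^{-\mu_j}$ contributes to $e^{-\beta(\sr)}$ only when $\mu_1 + \cdots + \mu_j = \beta(\sr)$ with each $\mu_i \in Q_+ \setminus \{0\}$; since every simple root has coefficient $1$ in $\beta(\sr)$, the supports $S_i := \supp \mu_i$ must partition $\sr$ into nonempty blocks and $\mu_i = \beta(S_i)$. Hence the coefficient of $e^{-\beta(\sr)}$ in $F^j$ is $\sum (-(-1)^{|S_1|})\cdots(-(-1)^{|S_j|})$ over ordered partitions $(S_1,\ldots,S_j)$ of $\sr$ into independent sets, which, using $|S_1| + \cdots + |S_j| = l$, equals $(-1)^{j+l}\, c_j(G)$ in the notation of \eqref{cpe}. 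Summing over $j$, the coefficient of $e^{-\beta(\sr)}$ in $U_0^q$ is $\sum_{j \ge 1} \binom{q}{j}(-1)^j (-1)^{j+l} c_j(G) = (-1)^l \sum_{j \ge 1} c_j(G)\binom{q}{j} = (-1)^l\,\pg$; since both sides are polynomials in $q$, the identity proved for positive integers $q$ holds identically.

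The only real obstacle I anticipate is the key step of the second paragraph: seeing that $W$-invariance of the form, together with $\form{\beta(S)}{\beta(S)} = 2|S| - 2\,\ed(S)$ and $\form{\rho}{\beta(S)} = |S|$, forces $S$ to be independent, and that regularity of $\rho$ pins $w_S$ down uniquely; everything else is bookkeeping with finitely many monomials. I also note, looking ahead, that expanding instead the \emph{product} side $\prod_{\alpha \in \rop}(1 - e^{-\alpha})^{q\,\mult\alpha}$ of \eqref{wk} and extracting the same coefficient yields $\sum_{\pi \in \bl} (-1)^{|\pi|}\,\mult\pi\; q^{|\pi|}$, so comparing the two expansions of $U_0^q$ immediately gives Theorem \ref{pkost}.
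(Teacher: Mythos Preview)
Your argument is correct and follows the same overall architecture as the paper's: expand $U_0^q$ binomially in powers of $U_0-1$, identify which Weyl group elements contribute to the coefficient of $e^{-\beta(S)}$, and match against \eqref{cpe}. The one genuinely different ingredient is your proof of the key claim that $w\rho=\rho-\beta(S)$ forces $S$ to be independent (and that $w$ is then unique). The paper obtains this by quoting Lemma~\ref{keylemma} (Lemma~2 of \cite{rvsv}), which is a statement about reduced expressions: the support $I(w)$ of $\rho-w\rho$ coincides with the simple reflections used in any reduced word for $w$, and some coefficient exceeds~$1$ unless $I(w)$ is independent. You instead use $W$-invariance of the bilinear form to get the norm identity $\form{\beta(S)}{\beta(S)}=2\form{\rho}{\beta(S)}$, which with $\form{\beta(S)}{\beta(S)}=2|S|-2\,\ed(S)$ and $\form{\rho}{\beta(S)}=|S|$ forces $\ed(S)=0$; uniqueness then comes from regularity of $\rho$. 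Your route is more self-contained and avoids the external lemma, at the cost of invoking the $W$-invariant form on $\csa^*$ (harmless here since the Cartan matrix is symmetric); the paper's route is purely Coxeter-combinatorial and ties the result back to the reduced-word machinery of \cite{rvsv}. Either way the bookkeeping in your third paragraph is exactly what the paper does, and your final remark about the product side is precisely how the paper deduces Theorem~\ref{pkost}.
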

\begin{proof}
We have $U_0^q =\sum\limits_{k\geq 0} {q \choose k}\,\xi^k$ where $\xi:=U_0 - 1 =  
\sum\limits_{w \neq e} \varepsilon(w) \, e^{w\rho - \rho}$. From equation \eqref{cpe}, it is clear that we only need to show that the coefficient of 
$e^{-\beta(\sr)}$ in $\xi^k$ equals $(-1)^l \,c_k(\mcg)$. 

For $w \in W$, let $\rho-w\rho=\sum\limits_{\alpha\in \sr}b_\alpha(w)\,\alpha$; we have  $b_\alpha(w) \in \integers_{\geq 0}$.  We also define 
$I(w):=\{\alpha\in \sr: s_\alpha \text{ appears in a given reduced word for } w\}$; 
this is independent of the reduced word chosen \cite{humphreys}. Let $\mathcal{I}:=\{w\in W\backslash \{e\}: I(w) \text{ is an independent set}\}$. The following lemma is a special case of lemma 2 of \cite{rvsv}. 

\begin{lemma}\label{keylemma}
Let $w\in W$. Then
\be
\item[(a)] $I(w)=\{\alpha \in \sr: b_\alpha(w) \geq 1\}$.
\item[(b)] If $w \in \mathcal{I}$, then $b_\alpha(w)= 1$ for all $\alpha \in I(w)$.
\item[(c)] If $w\notin \mathcal{I} \cup \{e\}$, then there exists $\alpha \in I(w)$ such that $b_{\alpha}(w)> 1$.
\ee
\end{lemma}
Given an independent subset $K$ of $\sr$, there is a unique element $w(K) \in \mathcal{I}$ with $I(w(K)) = K$; $w(K)$ is the product of the commuting simple reflections $\{s_\alpha: \alpha \in K\}$.  Now, it follows from lemma \ref{keylemma} that the coefficient of $e^{-\beta(\sr)}$ in $\xi^k$ equals 
$$\sum_{(w_1,\cdots,w_k)} \varepsilon(w_1w_2\cdots w_k)$$
where the sum ranges over $k$ tuples $(w_1,\cdots,w_k)$ such that $w_i \in \mathcal{I}$ for all $i=1,\cdots,k$ and  $(I(w_1), \cdots, I(w_k)) \in P_k(\mcg)$. In this case, $w_1w_2\cdots w_k$ is a Coxeter element of $W$, and has sign $(-1)^l$. Thus, the required coefficient is $(-1)^l \,c_k(\mcg)$, and proposition \ref{lemco} is proved. 
\end{proof}

\subsection{}
Now, using the product side of the Weyl-Kac denominator formula (equation \eqref{wk}), we obtain 
\beq\label{prodside}
U_0^q = \prod\limits_{\alpha \in \rop} (1 - e^{-\alpha})^{q \mult \alpha} =  \prod\limits_{\alpha \in \rop} (1 -  e^{-\alpha} \, q \mult \alpha + {O}(q^2)).
\eeq
Observe that the terms involving $q^2$ (and higher powers) also involve $e^{-n \alpha}$ for $n \geq 2$, and do not contribute to the coefficient of $e^{-\beta(\sr)}$. Thus the coefficient of $e^{-\beta(\sr)}$ in $U_0^q$ is 
$\sum_{\gamma \in \pp} (-q)^{|\gamma|} \, \mult \gamma$.
Lemma \ref{posiso} and proposition \ref{lemco} now complete the proof of Theorem \ref{pkost}{(a)}. 

To prove Theorem \ref{pkost}{(b)}, observe that for $\Sigma \in \bl$, we can identify the poset $\bl[G(\Sigma)]$ with the interval 
$\{\pi \in \bl: \zhat \leq \pi \leq \Sigma\}$. Further the multiplicity of a root $\beta(S)$ only depends on the subgraph induced by $S$. This means for $\pi \in \bl[G(\Sigma)]$, $\mult \pi$ is the same whether we consider the 
 Kac-Moody algebra associated to $G$ or to $G(\Sigma)$. Thus Theorem \ref{pkost}{(b)} follows from \ref{pkost}{(a)}.

Next, equation \eqref{qkosdef} shows that $\kost[\,\beta(\sr)] = \sum_{\gamma \in \pp} \mult \gamma \;\, q^{|\gamma|}$. Corollary \ref{kostcor} now follows similarly, using the isomorphism of lemma \ref{posiso}. Corollary \ref{kostliecorr} follows from the easy observations that (i) the $q$-Kostant partition function reduces to the usual Kostant partition function at $q=1$, and (ii) the coefficient of $q^1$ in $K(\beta; \,q)$ is $\mult \beta$.

Finally, we prove proposition \ref{mobmult}. We only need to show that $(-1)^{l - |\pi|} \, \mult \pi$ satisfies the following defining relations of the M\"obius function: (i) $\mob(\zhat,\zhat)=1$, and 
(ii) $\sum_{\zhat \leq \pi \leq \Sigma} \mob(\zhat,\pi)=0$ for all $\Sigma \neq \zhat$. 
We have $\mult \,\zhat=1$ since each simple root is of multiplicity 1. Further, for $\Sigma =\{S_1, \cdots, S_k\} \in \bl$, we have $\sum_{\zhat \leq \pi \leq \Sigma} (-1)^{l - |\pi|} \, \mult \pi = \pgsig[1]{\Sigma}$ by Theorem \ref{pkost}(b). If $\Sigma \neq \zhat$, then there exists $j$ for which $S_j$ has two or more vertices. Thus $\pgsig[1]{S_j}=0$, and hence $\pgsig[1]{\Sigma}=\prod_{i=1}^k \pgsig[1]{S_i} =0$. \qed

\section{The Peterson recurrence formula} \label{three}
For $\beta \in Q_+$, set $c_{\beta}:=\sum\limits_{n\geq1}n^{-1} \,\mult{(\beta/n)}$. Then the Peterson recurrence formula \cite{kac} says:

\beq \label{prf1}
\form{\beta}{\beta-2\rho}\,c_{\beta}=\sum\limits_{\substack{(\beta',\beta'')\in Q_+ \times Q_+\\ \beta'+\beta''=\beta}}\form{\beta'}{\beta''} \,c_{\beta'}\,c_{\beta''}.
\eeq

Let $\bb :=\{ \beta(S): S \text{ is a connected subset of } \sr\}$. For $\beta = \beta(S) \in \bb$, we let $\supp \beta :=S$. It is easy to see that for $\beta \in \bb$, equation \eqref{prf1} becomes:

\begin{equation}\label{prf2}
 \form{\beta}{\beta-2\rho}\, \mult{\beta}=2\sum\limits_{\substack{\beta',\beta''\in \bb\\ \beta'+\beta''=\beta}}\form{\beta'}{\beta''}\,\mult{\beta'}\,\mult{\beta''}.
\end{equation}
where the factor of $2$ arises by taking unordered, rather than ordered, pairs $\beta',\beta''$ in the sum.
Now, $\form{\beta}{\beta-2\rho}=- 2 \, \ed(\supp \beta)$ and
$\form{\beta'}{\beta''}=- \ed(\supp\beta', \supp\beta'')$.

Suppose $\beta$ is not a simple root, i.e., $|\supp \beta| > 1$, then $\supp \beta$ has at least one edge, and 
equation \eqref{prf2} gives:

\begin{equation} \label{prf3}
 \mult {\beta}=\sum\limits_{\substack{\beta',\beta''\in \bb\\ \beta'+\beta''=\beta}}
\frac{\ed(\supp \beta', \supp \beta'')}{\ed(\supp \beta)} \, \mult {\beta'} \mult {\beta''}.
\end{equation}

Now, if $\gamma=\{\beta_1,\cdots,\beta_k\} \in \pp$, let $\gamma^\dag:=\{\beta\in \gamma: \beta \text{ is not a simple root}\}$.
Given $\gamma, \gamma' \in \pp$ with $\gamma \covers \gamma'$, we write  $\gamma = \{\beta_1,\cdots, \beta_k\}$ and  $\gamma' = \{\beta'_1,\cdots, \beta'_{k+1}\}$ such that $\beta_i = \beta'_i$ for $1 \leq i < k$ and $\beta_k = \beta'_k + \beta'_{k+1}$ (thus $\beta_k \in \gamma^\dag$). Define 
$$w(\gamma,\gamma'):= \frac{1}{|\gamma^\dag|}\frac{2\form{\beta_k'}{\beta_{k+1}'}}{\form{\beta_k}{\beta_k-2\rho}}
= \frac{1}{|\gamma^\dag|}\frac{\ed(\supp \beta_k',\supp \beta_{k+1}')}{\ed(\supp \beta_k)}.$$

We now have the following lemma.
\begin{lemma} \label{lemprf}
 For all $\hat{0}\neq \gamma\in \pp$, we have
\beq \label{prf4}
\mult \gamma=\sum\limits_{\substack{\gamma' \in \pp\\ \gamma \covers \gamma'}}w(\gamma,\gamma') \, \mult \gamma'.
\eeq
\end{lemma}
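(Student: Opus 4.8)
The plan is to derive \eqref{prf4} from the Peterson recurrence \eqref{prf3} by grouping the covers of $\gamma$ according to which part gets split, applying \eqref{prf3} once to each non-simple part of $\gamma$, and exploiting the normalizing factor $1/|\gamma^\dag|$ built into the weight.

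Fix $\gamma = \{\beta_1, \ldots, \beta_k\} \in \pp$ with $\gamma \neq \zhat$, so that $\gamma^\dag \neq \emptyset$. The first step is a combinatorial description of the covers of $\gamma$. If $\gamma \covers \gamma'$ then, by the definition of the covering relation in $\pp$, the partition $\gamma'$ is obtained from $\gamma$ by replacing a single part $\beta_j$ by an unordered pair $\{\beta', \beta''\}$ of positive roots with $\beta' + \beta'' = \beta_j$; this part $\beta_j$ is uniquely determined, being the unique part of $\gamma$ that is not also a part of $\gamma'$ (in fact $\gamma$ and $\gamma'$ share exactly $|\gamma| - 1$ parts). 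Such a $\beta_j$ necessarily lies in $\gamma^\dag$, since a simple root cannot be a sum of two positive roots; and $\beta', \beta'' \in \bb$, being parts of $\gamma' \in \pp$ (under the isomorphism of Lemma \ref{posiso}, every part of an element of $\pp$ lies in $\bb$). Conversely, any such replacement produces a cover of $\gamma$. Hence
$$\{\gamma' \in \pp : \gamma \covers \gamma'\} \;=\; \bigsqcup_{j\,:\,\beta_j \in \gamma^\dag} C_j,$$
where $C_j$ denotes the set of covers obtained by splitting $\beta_j$, and $C_j$ is in bijection with the set of unordered pairs $\{\beta', \beta''\} \subset \bb$ satisfying $\beta' + \beta'' = \beta_j$.

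Now fix $j$ with $\beta_j \in \gamma^\dag$. For $\gamma' \in C_j$ coming from a split $\beta_j = \beta' + \beta''$ we have $\mult \gamma' = \big(\prod_{i \neq j} \mult \beta_i\big)\,\mult \beta'\,\mult \beta''$, while by the definition of the weight, $w(\gamma, \gamma') = \frac{1}{|\gamma^\dag|}\,\frac{\ed(\supp \beta', \supp \beta'')}{\ed(\supp \beta_j)}$. Since $\beta_j$ is not simple, $\supp \beta_j$ contains an edge, so $\ed(\supp \beta_j) > 0$ and the recurrence \eqref{prf3} applies to $\beta_j$. Summing over $C_j$ and using \eqref{prf3},
$$\sum_{\gamma' \in C_j} w(\gamma, \gamma')\,\mult \gamma' = \frac{1}{|\gamma^\dag|}\Big(\prod_{i \neq j}\mult \beta_i\Big)\sum_{\substack{\beta', \beta'' \in \bb\\ \beta' + \beta'' = \beta_j}}\frac{\ed(\supp \beta', \supp \beta'')}{\ed(\supp \beta_j)}\,\mult \beta'\,\mult \beta'' = \frac{1}{|\gamma^\dag|}\prod_{i=1}^{k}\mult \beta_i = \frac{\mult \gamma}{|\gamma^\dag|}.$$

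Finally, summing this identity over all $j$ with $\beta_j \in \gamma^\dag$ and using the disjoint-union decomposition of the first step yields
$$\sum_{\substack{\gamma' \in \pp\\ \gamma \covers \gamma'}} w(\gamma, \gamma')\,\mult \gamma' = \sum_{j\,:\,\beta_j \in \gamma^\dag}\ \frac{\mult \gamma}{|\gamma^\dag|} = \mult \gamma,$$
which is \eqref{prf4}. The only point requiring care is the bookkeeping in the first step — that each cover of $\gamma$ splits a \emph{unique} part, and that this part always lies in $\gamma^\dag$ — after which it is exactly the factor $1/|\gamma^\dag|$ in $w(\gamma, \gamma')$ that lets the $|\gamma^\dag|$ contributions $\mult \gamma / |\gamma^\dag|$ add back up to $\mult \gamma$; everything else is a direct substitution into the Peterson recurrence.
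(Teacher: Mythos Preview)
Your proof is correct and follows essentially the same approach as the paper: group the covers of $\gamma$ according to which non-simple part is split, apply the Peterson recurrence \eqref{prf3} to that part, and then average over $\gamma^\dag$ using the built-in factor $1/|\gamma^\dag|$. You supply more detail than the paper does (in particular, the justification that the split part is unique and necessarily lies in $\gamma^\dag$, and that the resulting decomposition into the $C_j$ is disjoint), but the argument is the same.
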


\begin{proof}
For $\gamma=\{\beta_1,\cdots,\beta_k\}$, equation \eqref{prf3} implies
$$\mult {\gamma}=\frac{1}{|\gamma^\dag|}\sum\limits_{\beta_i\in \gamma^\dag}\sum\limits_{\substack{\beta',\,\beta''\in \bb\\ \beta'+\beta''=\beta_i}}
\frac{\ed(\supp \beta', \supp \beta'')}{\ed(\supp \beta_i)} \, \mult {\beta'} \mult {\beta''} \prod_{j \neq i}\mult {\beta_j}.$$
The proof now follows from the fact that the elements $\gamma'$ covered by $\gamma$ are obtained by picking each $\beta_i \in \gamma^\dag$ and refining it into a sum of two elements of $\bb$ in all possible ways.\end{proof}

Since $\mult \zhat =1$, lemma \ref{lemprf} yields the following corollary on iteration.
\begin{corollary} \label{pcor}
Let $\gamma \in \pp$. Then
$$\mult \gamma = \sum_{\substack{p \text{ path in } \pp \\ \start(p) = \gamma \\ \eend(p) = \zhat}} w(p).$$
Here a {\em path in $\pp$} from $\gamma$ to $\zhat$ is a sequence $\gamma = \gamma_1 \covers \gamma_2 \cdots \covers \gamma_r = \zhat$, and its weight is defined to be $w(p) = \prod_{i=1}^{r-1} w(\gamma_i, \gamma_{i+1})$. The empty path ($\gamma = \zhat$) is taken to have weight 1.
\end{corollary}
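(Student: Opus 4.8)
The plan is to prove Corollary~\ref{pcor} by a straightforward induction, the only real input being the one-step recurrence of Lemma~\ref{lemprf}. I would induct on the rank $l - |\gamma|$ of $\gamma$ in $\pp$. Two preliminary observations make this work: first, if $\gamma \covers \gamma'$ then $|\gamma'| = |\gamma| + 1$, so every directed path in $\pp$ from $\gamma$ down to $\zhat$ has length exactly $l - |\gamma|$; since $\pp$ is finite, there are only finitely many such paths, and in particular the sum on the right-hand side of the claimed identity is a genuine finite sum. Second, $\mult \zhat = 1$, because $\zhat$ is the partition of $\sr$ into simple roots and each simple root has multiplicity one.

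The base case $l - |\gamma| = 0$ forces $\gamma = \zhat$: the only path from $\zhat$ to $\zhat$ is the empty path, of weight $1$ by convention, and this matches $\mult \zhat = 1$. For the inductive step, let $\gamma \neq \zhat$ and assume the formula for every element of $\pp$ of strictly larger cardinality. Lemma~\ref{lemprf} gives
\[
\mult \gamma = \sum_{\gamma \covers \gamma'} w(\gamma, \gamma') \, \mult \gamma',
\]
and since each $\gamma'$ here has $|\gamma'| = |\gamma| + 1$, the inductive hypothesis lets me rewrite $\mult \gamma'$ as $\sum_{p'} w(p')$, the sum over paths $p'$ in $\pp$ from $\gamma'$ to $\zhat$. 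Substituting and combining the (finite) sums yields
\[
\mult \gamma = \sum_{\gamma \covers \gamma'} \; \sum_{p' \colon \gamma' \to \zhat} w(\gamma, \gamma') \, w(p').
\]
Now a path $p$ in $\pp$ from $\gamma$ to $\zhat$ is the same datum as the choice of its initial edge $\gamma \covers \gamma'$ together with the remaining path $p'$ from $\gamma'$ to $\zhat$, and under this correspondence $w(p) = w(\gamma, \gamma')\, w(p')$ directly from the definition of $w(p)$ as the product of the weights of the edges of $p$. Hence the double sum is precisely $\sum_{p \colon \gamma \to \zhat} w(p)$, which completes the induction.

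I do not expect any genuine obstacle: the mathematical content is all packed into Lemma~\ref{lemprf}, and the corollary merely unfolds that recurrence along all maximal chains down to $\zhat$. The only points requiring a word of care are that the right-hand sum is finite (guaranteed by the length-$(l-|\gamma|)$ observation above) and that Lemma~\ref{lemprf} applies only when $\gamma \neq \zhat$, which is exactly why $\zhat$ must be handled separately as the base case.
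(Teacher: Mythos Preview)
Your proof is correct and follows exactly the approach indicated in the paper: the corollary is obtained from Lemma~\ref{lemprf} by iteration (equivalently, induction on $l - |\gamma|$), using $\mult \zhat = 1$ as the base case. You have simply spelled out the details of that iteration more carefully than the paper does.
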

Clearly, corollary \ref{pcor} is equivalent to proposition \ref{mpipath} via the isomorphism between $\pp$ and $\bl$ of lemma \ref{posiso}. As shown in the introduction, Theorem \ref{mainthm} now follows. \qed


\begin{thebibliography}{10}

\bibitem{erik2}
Henrik Eriksson and Kimmo Eriksson.
\newblock Conjugacy of {C}oxeter elements.
\newblock {\em Electron. J. Combin.}, 16(2, Special volume in honor of Anders
  Bjorner):Research Paper 4, 7, 2009.

\bibitem{humphreys}
James~E. Humphreys.
\newblock {\em Reflection groups and {C}oxeter groups}, volume~29 of {\em
  Cambridge Studies in Advanced Mathematics}.
\newblock Cambridge University Press, Cambridge, 1990.

\bibitem{kac}
V.~G. Kac.
\newblock {\em Infinite dimensional {L}ie algebras}.
\newblock Cambridge University Press, third edition, 1990.

\bibitem{mac-mor}
Matthew Macauley and Henning~S. Mortveit.
\newblock On enumeration of conjugacy classes of {C}oxeter elements.
\newblock {\em Proc. Amer. Math. Soc.}, 136(12):4157--4165, 2008.

\bibitem{rcread}
Ronald~C. Read.
\newblock An introduction to chromatic polynomials.
\newblock {\em J. Combinatorial Theory}, 4:52--71, 1968.

\bibitem{rota}
Gian-Carlo Rota.
\newblock On the foundations of combinatorial theory. {I}. {T}heory of
  {M}\"obius functions.
\newblock {\em Z. Wahrscheinlichkeitstheorie und Verw. Gebiete}, 2:340--368
  (1964), 1964.

\bibitem{jyshi1}
Jian-Yi Shi.
\newblock The enumeration of {C}oxeter elements.
\newblock {\em J. Algebraic Combin.}, 6(2):161--171, 1997.

\bibitem{jyshi2}
Jian-yi Shi.
\newblock Conjugacy relation on {C}oxeter elements.
\newblock {\em Adv. Math.}, 161(1):1--19, 2001.

\bibitem{stanley-acyc}
Richard~P. Stanley.
\newblock Acyclic orientations of graphs.
\newblock {\em Discrete Math.}, 5:171--178, 1973.

\bibitem{stanley-sfgcp}
Richard~P. Stanley.
\newblock A symmetric function generalization of the chromatic polynomial of a
  graph.
\newblock {\em Adv. Math.}, 111(1):166--194, 1995.

\bibitem{rvsv}
R.~Venkatesh and Sankaran Viswanath.
\newblock Unique factorization of tensor products for {K}ac-{M}oody algebras.
\newblock {\em Adv. Math.}, 231(6):3162--3171, 2012.

\end{thebibliography}

\end{document}